\subjclass[2020]{55P35, 55P50,16E40,18M80}
\newtheorem{thm}{Theorem}[section]
\newtheorem{cor}[thm]{Corollary}
\newtheorem{prop}[thm]{Proposition}
\theoremstyle{definition}
\newcommand{\Tot}{\mathrm{Tot}}
\newcommand{\Map}{\mathrm{Map}}
\newcommand{\Hom}{\mathrm{Hom}}
\newcommand{\cyc}{\mathrm{cyc}}
\newcommand{\op}{\mathrm{op}}
\newcommand{\ch}{\mathrm{ch}}
\newcommand{\HH}{\mathrm{HH}}
\theoremstyle{remark}
\newtheorem{rem}[thm]{Remark}
\def\header#1#2{}	
\theoremstyle{definition}
\def\xx{\times}
\newcommand{\Z}{\mathbb{Z}}
\newcommand{\oo}{\otimes}
\newcommand{\Ch}{\textbf{Ch}}
\newcommand{\sur}{\mathcal{X}}
\newcommand{\hc}{\mathrm{hc}}
\newcommand{\id}{\mathrm{id}}
\newcommand{\pt}{\mathrm{pt}}
\theoremstyle{plain}
\newtheorem{theorem}{Theorem}
\newtheorem*{theorem*}{Theorem}
\newtheorem{lemma}{Lemma}
\newtheorem{proposition}{Proposition}
\theoremstyle{definition}
\newtheorem{definition}{Definition}
\newtheorem{example}{Example}
\newtheorem*{remark}{Remark}
\let\c@equation\c@thm
\numberwithin{equation}{section}
\title{Non-Formality of $S^2$ via the free loop space}
\address{Trinity College Dublin, Dublin, Ireland}
\email{rymcgowa@tcd.ie}
\author{Ryan McGowan}
\address{Trinity College Dublin, Dublin, Ireland}
\email{naeff@tcd.ie}
\author{Florian Naef}
\address{Trinity College Dublin, Dublin, Ireland}
\email{brocalla@tcd.ie}
\author{Brian O'Callaghan}
\date{}
 \pgfplotsset{compat=1.18}
\begin{document}

\begin{abstract}
We show that the $E_1$-equivalence $C^\bullet(S^2) \simeq H^\bullet(S^2)$ does not intertwine the inclusion of constant loops into the free loop space $S^2 \to LS^2$. That is, the isomorphism $\HH_\bullet(H^\bullet(S^2)) \cong H^\bullet(LS^2)$ does not preserve the obvious maps to $H^\bullet(S^2)$ that exist on both sides. We give an explicit computation of the defect in terms of the $E_\infty$-structure on $C^\bullet(S^2)$. Finally, we relate our calculation to recent work of Poirier-Tradler on the string topology of $S^2$.

\end{abstract}

\maketitle

\section{Introduction}\label{d1}
For a simply connected space $X$, Jones (\cite{jones1987cyclic} and corrected in \cite{Un17}) gives an isomorphism
\[
\psi \colon \HH_\bullet(N^\bullet(X)) \cong H^\bullet(LX),
\]
between the Hochschild homology of the (normalized) cochain algebra $N^\bullet(X)$, and the cohomology of the free loops space $LX = \Map(S^1, X)$. As Hochschild homology only depends on $N^\bullet(X)$ as a dg-algebra (and not as an $E_\infty$-algebra) one obtains that if $X$ is $E_1$-formal there is an isomorphism
\[
\phi \colon \HH_\bullet(H^\bullet(X)) \cong H^\bullet(LX).
\]
Since $H^\bullet(X)$ is a graded-commutative algebra there exists a natural map
\[
f \colon \HH_\bullet(H^\bullet(X)) \to H^\bullet(X).
\]
On the geometric side the inclusion of constant loop $X \to LX$ induces a map
\[
h \colon H^\bullet(LX) \to H^\bullet(X).
\]
However, as explained by \cite{Un17}, the isomorphism $\psi$ uses the $E_\infty$-structure of $N^\bullet(X)$ and thus it does not follow that $\phi$ intertwines the two maps $f$ and $h$, and indeed for $X=S^2$ this is not the case.

\begin{theorem}\label{thm:doesnotcommute}
The diagram
 \[
\begin{tikzcd}[column sep=huge, row sep=huge]
          \HH_{\bullet}(H^{\bullet}(S^2, \Z_2)) \arrow{r}{f} \arrow[d, "\phi"] & H^{\bullet}(S^2, \Z_2) \ar[d, equal] \\
        H^\bullet(LS^2, \Z_2)  \arrow{r}{h} & H^{\bullet}(S^2, \Z_2)
 \end{tikzcd}
 \]
does \emph{not} commute.
\end{theorem}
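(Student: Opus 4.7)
The plan is to exhibit a single class in $\HH_\bullet(H^\bullet(S^2,\Z_2))$ whose image under $f$ and under $h\circ\phi$ disagree. Write $A=H^\bullet(S^2,\Z_2)=\Z_2[x]/(x^2)$, $|x|=2$, and compute $\HH_\bullet(A)$ using the standard two-periodic $A^e$-free resolution of $A$. In characteristic two the two signs $\pm 2x$ appearing in the induced boundary on $A\otimes_{A^e}(-)$ both vanish, giving $\HH_n(A)\cong A$ for every $n\ge 0$, with generator in Hochschild degree $n$ represented by the bar cycle $1\otimes x^{\otimes n}$. Under the Jones grading, the class $\alpha:=[1|x|x]\in\HH_2(A)$ sits in total cohomological degree $2$, and $f(\alpha)=0$ since $f$ is the projection onto Hochschild degree zero.

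Next I would pin down $H^2(LS^2,\Z_2)$, which is two-dimensional with natural basis $\{\mathrm{ev}^*(x),\beta\}$, where $\mathrm{ev}^*(x)$ is the pullback of $x$ along the evaluation $\mathrm{ev}\colon LS^2\to S^2$ and $\beta$ spans a chosen complement. Since the constant-loop inclusion splits $\mathrm{ev}$, the map $h$ sends $a\cdot\mathrm{ev}^*(x)+b\cdot\beta$ to $a\cdot x$, so $\ker h=\Z_2\cdot\beta$. Consequently the theorem reduces to showing that the $\mathrm{ev}^*(x)$-component of $\phi(\alpha)$ is nonzero.

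To extract that component I would factor $\phi=\psi\circ\HH_\bullet(\iota)$, where $\iota\colon A\to N^\bullet(S^2)$ is any fixed $E_1$-quasi-isomorphism and $\psi$ is Jones' isomorphism, the latter genuinely using the $E_\infty$-structure on $N^\bullet(S^2)$. Pushing the cycle $1\otimes\iota(x)\otimes\iota(x)$ through an explicit formula for $\psi$ produces a correction term involving the $\smile_1$-product of $\iota(x)$ with itself; this correction is precisely what distinguishes $\phi(\alpha)$ from a class lying entirely in $\ker h$. I would then argue that the correction is independent of the choice of $\iota$ on cohomology and that it pairs nontrivially with a geometric cycle dual to $\mathrm{ev}^*(x)$, for instance a meridian sphere in $LS^2$ near a constant loop.

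The main obstacle will be this final cochain-level calculation: one must work in a concrete $E_\infty$-operad model of $N^\bullet(S^2)$ (for instance the surjection or Barratt--Eccles model, consistent with the paper's operadic approach), identify the resulting $\smile_1$-cocycle explicitly on a small cell model of $S^2$, and confirm that its contribution lands on the $\mathrm{ev}^*(x)$-line rather than on $\beta$. This is where the non-$E_\infty$-formality of $S^2$ enters substantively, and where the link to Poirier--Tradler's string topology computation (flagged in the abstract) is expected to appear.
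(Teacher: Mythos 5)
Your reduction is sound and follows the same strategy as the paper: you single out the class $1\otimes x\otimes x\in\HH_2(H^\bullet(S^2,\Z_2))$ (the paper's $1\otimes\alpha\otimes\alpha$), observe that $f$ kills it because $f$ is the projection onto the Hochschild-degree-zero summand, and reduce the theorem to showing that $h(\phi(1\otimes x\otimes x))\neq 0$. The paper does exactly this, and, like you, it works on a minimal cell model (the simplicial $\Delta^2/\partial\Delta^2$, for which $N^\bullet(S^2)=H^\bullet(S^2)$ as associative algebras, so no auxiliary quasi-isomorphism $\iota$ is needed) and uses the surjection operad as the concrete $E_\infty$-model.

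There are, however, two genuine gaps. First, the decisive step --- verifying that your ``correction term'' is nonzero --- is precisely what you defer as ``the main obstacle,'' and it is the entire content of the proof: a priori that term could vanish and the diagram could commute, so nothing is established until it is computed. Second, the mechanism you name would fail if taken literally. On the minimal model, $x$ is supported on the unique nondegenerate $2$-simplex and $x\smile_1 x$ lies in $N^3(S^2)=0$, so a correction given by ``the $\smile_1$-product of $\iota(x)$ with itself'' vanishes identically. What actually detects the failure is a \emph{secondary} operation: the component $M^2_0\colon N^\bullet(S^2)^{\otimes 3}\to N^\bullet(S^2)$ of the homotopy coherent extension is induced by an element $V\in\sur(3)$ of operadic degree $2$ solving $dV=U$, where $U$ is a specific cocycle built from $\smile$ and $\smile_1$ (Proposition~\ref{surrep}); one must then evaluate $V$ on the $2$-simplex via the Berger--Fresse interval-cut action and check that $M^2_0(1\otimes\alpha\otimes\alpha)=\alpha\neq 0$. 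Your alternative of pairing against a geometric meridian cycle in $LS^2$ could in principle substitute for this, but it is a different and unexecuted computation; without it, or the operadic one, the argument is incomplete.
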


As the map $\HH_\bullet(N^\bullet(X)) \to H^\bullet(N^\bullet(X))$ can be expressed in terms of the $E_\infty$-operations, this in particular shows that $N^\bullet(S^2)$ is not $E_\infty$-formal as an $E_\infty$-algebra. Note that using factorization homology (over the inclusion of an annulus into a disk, see for instance \cite[Proposition 5.3]{ayala2015factorization}) one obtains that such an "augmentation map" only depends on the framed $E_2$-structure on $N^\bullet(X)$. We thus obtain that 
\begin{cor}
The $E_\infty$-algebra $N^\bullet(S^2)$ is not formal as a framed $E_2$-algebra (whereas it is formal as an $E_2$-algebra, see \cite{mandell09, HeutsLandPrep}).
\end{cor}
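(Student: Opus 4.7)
The plan is to deduce non-formality directly from Theorem \ref{thm:doesnotcommute} via the factorization-homology description of the augmentation $h \circ \psi$. I argue by contradiction: framed $E_2$-formality would force the diagram to commute.

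First, I would describe $h \circ \psi : \HH_\bullet(N^\bullet(S^2)) \to H^\bullet(S^2)$ as a factorization-homology operation. For a framed $E_2$-algebra $A$ one has $\int_{S^1 \times \R} A \simeq \HH_\bullet(A)$ and $\int_{D^2} A \simeq A$, and any embedding of the annulus into the disk induces a map $\HH_\bullet(A) \to A$ that depends only on the framed $E_2$-structure of $A$. Applied to $A = N^\bullet(S^2)$, this is precisely $h \circ \psi$, as pointed out in the remark citing \cite[Proposition 5.3]{ayala2015factorization}. The framed operad, rather than plain $E_2$, is essential here: the embedding $S^1 \times \R \hookrightarrow D^2$ rotates the framing along the annulus, so the induced map is not defined from an $E_2$-structure alone but requires the compatible $SO(2)$-action.

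Second, suppose for contradiction that $N^\bullet(S^2)$ is formal as a framed $E_2$-algebra, so that it is equivalent to $H^\bullet(S^2)$ with its canonical framed $E_2$-structure coming from strict graded-commutativity. By functoriality of factorization homology, the augmentation $h \circ \psi$ is then identified (via $\phi$) with the analogous augmentation computed for $H^\bullet(S^2)$. For any strictly graded-commutative algebra $A$, the framed $E_2$-operations factor through $\mathrm{Comm}$ and the annulus-to-disk map is canonically identified with the natural projection $f : \HH_\bullet(A) \to A$ of the introduction. Under the hypothesis, the diagram of Theorem \ref{thm:doesnotcommute} would therefore commute, contradicting the theorem. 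Hence $N^\bullet(S^2)$ is not formal as a framed $E_2$-algebra.

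The principal obstacle is the identification in the first step: matching Jones' equivalence $\psi$ with the factorization-homology equivalence $H^\bullet(LX) \simeq \int_{S^1 \times \R} N^\bullet(X)$, and verifying that the geometric inclusion of constant loops $S^2 \hookrightarrow LS^2$ corresponds to the annulus-to-disk collapse. These compatibilities are standard in the factorization-homology literature, and the argument reduces to invoking them together with \cite[Proposition 5.3]{ayala2015factorization}; everything else is a formal consequence of Theorem \ref{thm:doesnotcommute}.
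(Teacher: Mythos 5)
Your proposal is correct and follows essentially the same route as the paper: the paper's own justification is exactly the remark preceding the corollary, namely that the augmentation $\HH_\bullet(N^\bullet(X)) \to N^\bullet(X)$ is the factorization-homology map induced by the annulus-to-disk inclusion and hence depends only on the framed $E_2$-structure, so Theorem \ref{thm:doesnotcommute} rules out framed $E_2$-formality. Your additional care about identifying the Jones--Ungheretti map with the factorization-homology equivalence, and about why the framed (rather than plain) $E_2$-structure is what is needed, is consistent with what the paper leaves implicit via the citation to \cite{ayala2015factorization}.
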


Let us briefly explain the relevance of the above to string topology. 
Evaluation on the fundamental class of $S^2$ gives a map $H^2(S^2, \Z_2) \to \Z_2$. From the above we thus get two distinct elements in $\Hom_{\Z_2}( \HH_2(H^\bullet(S^2, \Z_2)), \Z_2)$, which we denote by $\mathbf{F}$ and $\tilde{\mathbf{F}}$ for the elements induced by $f$ and $h \circ \phi$, respectively. Recall that elements in $\Hom_{\Z_2}( \HH_2(H^\bullet(S^2, \Z_2)), \Z_2)$ can be identified with (homotopy classes of) bimodule maps with higher homotopies $H^\bullet(S^2,\Z_2) \to H_\bullet(S^2, \Z_2)$ in the sense of Tradler \cite{tradler2008infinity} 

\begin{theorem}\label{thm:maintheorem2}
    The bimodule map with higher homotopies $\widetilde{\mathbf{F}}$ corresponding to $\HH_2(H^\bullet(S^2, \Z_2)) \overset{\phi}{\to} H^2(LS^2, \Z_2) \to \Z_2$ coincides with the local bimodule map with higher homotopies constructed in \cite{poirier2023note}.
\end{theorem}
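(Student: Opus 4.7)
The plan is to match the two bimodule maps with higher homotopies by computing both on a distinguished set of generators. Under Tradler's correspondence \cite{tradler2008infinity}, an element of $\Hom_{\Z_2}(\HH_\bullet(A), \Z_2)$ with $A = H^\bullet(S^2, \Z_2) = \Z_2[a]/(a^2)$ (where $|a|=2$) corresponds to a collection of multilinear maps $F_k : A^{\otimes k} \to A^\vee$ satisfying the $A_\infty$-bimodule compatibility equations. Two such collections define the same element of $\Hom_{\Z_2}(\HH_\bullet(A), \Z_2)$ if and only if they take the same value on every generating Hochschild cycle; since $A$ is so small, there are only finitely many such generators in each total degree and the comparison ultimately becomes combinatorial.

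For $\widetilde{\mathbf{F}}$, I would first make the composition $h \circ \phi$ operadically concrete. As observed in the passage following Theorem~\ref{thm:doesnotcommute}, it is the factorization-homology augmentation induced by the inclusion of an annulus into a disk, and so is determined by the framed $E_2$-structure on $N^\bullet(S^2, \Z_2)$. Working with an explicit $E_\infty$-model for $N^\bullet(S^2, \Z_2)$ — for instance via the Barratt-Eccles operad — and tracing through the Hochschild-to-annulus identification of \cite{ayala2015factorization}, I would extract a closed-form expression for $\widetilde{\mathbf{F}}$ in terms of Steenrod $\smile_i$-type operations applied to cocycle representatives of $1$ and $a$.

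I would then unpack the Poirier-Tradler local construction. Because it is built from chain-level intersection operations on $LS^2$ concentrated near constant loops, it is intrinsically a framed $E_2$-type operation on cochains and therefore reduces to a formula of the same general shape. The main obstacle will be bridging the two combinatorial formalisms — the operadic/cyclic bar model producing $\widetilde{\mathbf{F}}$ versus the surface-with-chord-diagrams model of \cite{poirier2023note} — and showing that both outputs agree after pairing with the fundamental class. I expect this to reduce to a concrete combinatorial identity between two presentations of the same underlying Steenrod $\smile_1$-type operation, which is essentially the only nontrivial framed $E_2$-operation available in the relevant bidegree; once that identity is verified on each Hochschild generator, naturality with respect to the framed $E_2$ structure propagates the equality to the whole bimodule map with higher homotopies.
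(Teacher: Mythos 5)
Your overall strategy---compare the two bimodule maps with higher homotopies by evaluating both on the handful of Hochschild generators of $A = H^\bullet(S^2,\Z_2)$---is the same as the paper's, but the proposal leaves the two decisive steps unexecuted. First, the actual number: everything hinges on computing $\widetilde{\mathbf{F}}$ on the class $1 \otimes \alpha \otimes \alpha \in \HH_2(A)$, and your proposal defers this to an ``expected'' combinatorial identity between Steenrod-type operations that is never verified. The paper does not need any new work here because the preceding sections already produced, via the surjection-operad resolution of the Jones--Ungheretti map, the explicit value $\Psi(1\otimes\alpha\otimes\alpha) = M^2_0(1\otimes\alpha\otimes\alpha) = \alpha$, hence $W(1\otimes\alpha\otimes\alpha)=1$ after pairing with the fundamental class. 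Without that input (or an equivalent computation through your proposed factorization-homology/annulus route, which you only sketch), the comparison cannot be carried out; in particular your route would still have to reproduce the $\smile_1$-level calculation that Sections 3--4 perform, so it buys nothing and adds the burden of making the annulus identification explicit at the chain level.

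Second, you do not address how a functional on $\HH_\bullet(A)$ is converted into componentwise data $F_{p,q}$ that can be compared with the Poirier--Tradler formulas. The paper's proof uses the quasi-isomorphism $(B(A,A,A)\otimes_A B(A,A,A))\otimes_{A^e} A \to B^\cyc(A)$ to normalize so that $F_{p,q}=0$ for $q\neq 0$; only after this normalization does the statement ``$F_{2,0}(\alpha\otimes\alpha\otimes 1\otimes 1) = W(1\otimes\alpha\otimes\alpha) = 1$ and all other components vanish'' make sense as a component-by-component match with \cite[Example 4.2]{poirier2023note}. Relatedly, you propose to re-derive the Poirier--Tradler side as a framed $E_2$-operation, which is both harder and unnecessary: their local construction is given by explicit finite formulas (nonzero components $\tilde{\mathbf{F}}_{0,0}$, the Poincar\'e pairing, and $\tilde{\mathbf{F}}_{2,0}(\alpha\otimes\alpha\otimes 1\otimes 1)=1$), so one simply quotes those values and checks equality. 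Your closing appeal to ``naturality with respect to the framed $E_2$ structure'' to propagate the equality is not needed and not really meaningful here: once the normalized components agree on the finitely many generators, the two bimodule maps with higher homotopies already coincide.
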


Using different terminology we can interpret the above as follows. Given an $E_\infty$-algebra $A$ together with a map $A \to \Z_2[d]$ (which we call the fundamental class) there is a canonical map $\HH(A) \to A \to \Z_2[d]$ which we can ask to be a right Calabi-Yau structure. Note that in this case the fundamental class is unique, so that we can say that $E_\infty$ algebras satisfying a version of Poincar\'e duality have a canonical right Calabi-Yau structure. However, the Calabi-Yau structure does not just depend on the underlying $E_1$-algebra structure of $A$. For $A=H^\bullet(S^2, \Z_2)$ we obtain two Calabi-Yau structures, one by considering $A$ to be a commutative algebra and the other by considering $A$ as the $E_\infty$ cochain algebra on $S^2$. The main result of \cite{poirier2023note} is then that the latter of those two structures is the geometrically correct one.

\subsection{Acknowledgements:}
The first and third author would like to acknowledge the Hamilton Trust for their financial support. We thank Pavel Safronov for useful discussions and Nathalie Wahl for the initial idea of the project and for suggesting to use Ungheretti's correction.

\subsection{Conventions}
We note that $\textbf{Ch}$, the category of homologically graded chain complexes of abelian groups, is equipped with the structure of a closed monoidal symmetric category. The differential on the tensor product of two elements is given by $d(x \oo y) =dx \oo y +(-1)^{|x|}x \oo dy$. We denote the internal hom between two chain complexes $X$ and $Y$ as $\underline{\textbf{Ch}}(X,Y)$, which is itself a chain complex, that in degree $n$ consists of linear maps of degree $n$. The internal hom has differential $(d\varphi)(x) = d \circ \varphi(x) + (-1)^{|x|} \varphi(dx)$.\\

As the main calculation we wish to do is over $\Z_2$ we will denote signs arising from the Koszul rule (and others) simply by $\pm$.

\subsection{Organisation of the paper}
Section 2 recalls the Hochschild complex over which we primarily work. In section 3, we discuss the Jones-Ungheretti isomorphism, establish it in more explicit detail for computational purposes, and the homotopy-coherent natural transformations involved. Section 4 then translates this map into the language of the surjection operad of \cite{Be04}. This allows for us to demonstrate both main theorems in Section 5.

\section{Hochschild Homology of $H^\bullet(S^2, \Z_2)$}
Given an associative differential graded algebra $A$, the cyclic bar construction of $A$ is the simplicial cochain complex
\begin{align*}
    B^\cyc(A) \colon \Delta^\op &\longrightarrow \Ch \\
    n &\mapsto A^{\otimes n+1}
\end{align*}
with simplicial maps given by
\begin{align*}
\delta^j(a_0 \oo \cdots a_j \oo a_{j+1} \oo a_n) = & a_0 \oo \cdots \oo a_j a_{j+1} \oo \cdots \oo a_n \\
\delta^n(a_0 \oo \cdots \oo a_n) = & \pm a_n a_0 \oo \cdots \oo a_{n-1}\\
\sigma^i( a_0 \oo \cdots \oo a_n) = & a_0 \oo \cdots a_i  \oo 1 \oo a_{i+1} \oo \cdots \oo a_n
\end{align*}
%\todo{insert formula}
Hochschild homology $\HH(A)$ is then defined as the homology of the (direct sum) totalization of $N^\cyc(A)$.

Let us now consider $A = N^\bullet(S^2, \Z_2)$. More precisely, we let $S^2 = \Delta^2/\partial\Delta^2$ be the simplicial two-sphere, the triangulation of the sphere consisting of one unique non-degenerate 2-simplex, one non-degenerate 0-simplex and no other non-degenerate simplices. Then we take simplicial chains on $S^2$. Note that in this case there is no differential, and we get that
\[
N^\bullet(S^2, \Z_2) = H^\bullet(S^2, \Z_2),
\]
as associative algebras with Alexander-Whitney product on the left (this is not true when considered as $E_\infty$-algebras which the main result of the paper will show).

Let $\alpha \in H^2(S^2,\Z_2)$ denote the generator of $H^2(S^2,\Z_2)$. By taking normalized chains we obtain that
% \todo{The notation here involving $n$ isn't the clearest}
\[
\HH_\bullet(H^\bullet(S^2,\Z_2)) = \bigoplus_{n \geq 0} H^{\bullet}(S^2, \Z_2) \oo \overline{H^{\bullet}(S^2, \Z_2)}^{\oo n},
\]

that is, all the differentials vanish and we obtain that $\HH_\bullet(H^\bullet(S^2,\Z_2)$ has a basis given by elements of the form $1 \oo \alpha \oo \cdots \oo \alpha$ and $\alpha \oo \alpha \oo \cdots \oo \alpha $. In particular, we obtain
\[
\HH_2(H^\bullet(S^2,\Z_2)) = \Z_2 \alpha \oplus \Z_2 (1 \otimes \alpha \otimes \alpha).
\]

\section{The Jones-Ungheretti isomophism}
We recall the construction of the isomorphism
\[
\HH_\bullet(N^\bullet(X,\Z_2)) \cong H^\bullet(LX, \Z_2),
\]
given in \cite{Un17}.

We adopt the definition of the free loop space as the following simplicial mapping space (See \cite{Lo} and \cite{jones1987cyclic} for more details). Let $S^1_\bullet = \Delta^1 / \partial \Delta^1$ be the simplicial circle. We can then form the cosimplicial space 
\[
G \colon [n] \mapsto \operatorname{Map}(S^1_n, X) = X^{n+1}
\]
which totalizes to the free loop space $LX$. Applying normalized cochains we obtain a simplicial cochain complex
\begin{align*}
G \colon \Delta^\op &\longrightarrow \Ch \\
[n] &\mapsto N^\bullet(X^{n+1}),
\end{align*}
which totalizes to cochains on $LX$ assuming $X$ is simply connected. 
\begin{theorem}[Ungheretti]
There exists a zig-zag of equivalences of simplicial cochain complexes
\[
B^\cyc(N^\bullet(X)) \xleftarrow{\sim} QB^\cyc(N^\bullet(X)) \xrightarrow{\sim} G.
\]
In particular, this induces an isomorphism
\[
\phi \colon \HH_\bullet(N^\bullet(X)) \xrightarrow{\cong} H^\bullet(LX),
\]
in case $X$ is simply-connected.
\end{theorem}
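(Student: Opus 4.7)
The plan is to construct the two arrows of the zig-zag level-wise, and then to pass to totalizations, using simple connectivity of $X$ to ensure convergence on the geometric side.

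At each simplicial level $[n]$ one has the Eilenberg--Zilber cross product
\[
\mathrm{EZ}\colon N^\bullet(X)^{\oo (n+1)} \xrightarrow{\sim} N^\bullet(X^{n+1}),
\]
which is a quasi-isomorphism natural in $X$. However, this map does not strictly commute with the simplicial structure on the two sides: on $B^\cyc(N^\bullet(X))$ the inner face maps use the strictly associative Alexander--Whitney cup product, whereas the corresponding face maps on $G$ come from pulling back along the diagonals $X^n \to X^{n+1}$; these only agree after $\mathrm{EZ}$ up to coherent chain homotopy. The cyclic face $\delta^n$ introduces a further discrepancy through a cyclic permutation that uses the symmetry of the tensor product, and this is the subtlety missed in Jones's original argument.

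To rectify this, I would construct a functorial level-wise cofibrant replacement $QB^\cyc(N^\bullet(X))$ together with two natural simplicial maps
\[
B^\cyc(N^\bullet(X)) \xleftarrow{\sim} QB^\cyc(N^\bullet(X)) \xrightarrow{\sim} G
\]
that are level-wise quasi-isomorphisms. Concretely, one can build $Q$ by the method of acyclic models applied to the two functors $(X,[n]) \mapsto N^\bullet(X)^{\oo(n+1)}$ and $(X,[n]) \mapsto N^\bullet(X^{n+1})$, which agree on point-like models; the required homotopy coherences then assemble by induction over the simplicial skeleton, reproducing Ungheretti's explicit formulas.

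Finally, one totalizes. The totalization of $B^\cyc(N^\bullet(X))$ is by definition the Hochschild chain complex, whose homology is $\HH_\bullet(N^\bullet(X))$. The totalization of $G$ computes $H^\bullet(LX)$ provided the associated Bousfield--Kan spectral sequence converges strongly, which is guaranteed by simple connectivity of $X$. Under this convergence, level-wise quasi-isomorphisms of simplicial cochain complexes induce quasi-isomorphisms on totalizations, yielding the desired $\phi$. The main obstacle is the functorial construction of $Q$: it must reconcile the strictly associative Alexander--Whitney product with the only up-to-homotopy coassociativity of diagonals on $X$, coherently for all face, degeneracy, and cyclic operators simultaneously; this coherence is precisely the content of Ungheretti's correction to Jones's original construction.
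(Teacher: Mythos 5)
Your architecture matches Ungheretti's (and the paper's): levelwise cross-product maps $N^\bullet(X)^{\oo (n+1)} \to N^\bullet(X^{n+1})$ that fail to be strictly simplicial, a cofibrant replacement $Q$ (the standard bar resolution of the diagram over $\Delta^\op$, so that maps $QF \to G$ are exactly homotopy coherent natural transformations), and a final totalization step in which simple connectivity enters only to identify $\Tot(G)$ with $N^\bullet(LX)$ via Eilenberg--Moore/Bousfield--Kan convergence; you also correctly locate the essential subtlety in the cyclic face map. The one genuine divergence is the mechanism for producing the coherences: you invoke acyclic models, whereas the paper (following Ungheretti) observes that all candidate homotopies factor through a map $\prod \sur(i_0+1) \to \prod \underline{\Ch}(F(i_0),G(i_n))$ for an $E_\infty$-operad $\sur$ acting naturally on normalized cochains, and solves the coherence equations inductively using contractibility of each $\sur(n)$. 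The two are close in spirit --- the operad is essentially the space of natural operations that an acyclic-models argument would produce --- but two caveats apply to your route: (i) acyclic models in its standard form is stated for covariant functors on chains, so applying it to the contravariant functors $X \mapsto N^\bullet(X)^{\oo (n+1)}$ requires dualizing and finite-type or field-coefficient hypotheses even to make the levelwise cross product a quasi-isomorphism; (ii) the operadic formulation buys something the rest of the paper depends on, namely that the homotopies $A^1_{\sigma_i}$ and $A_{i,j}$ are specific elements of $\sur(2)$ and $\sur(3)$ (e.g.\ $\smile_1$ and the element $V$) that can be evaluated explicitly on the simplicial $S^2$. Your sketch therefore suffices for the existence statement, but the operadic version is the one needed for the computations that follow.
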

Here $QB^\cyc(N^\bullet(X))$ is the standard resolution of a simplicial chain complex which we recall below.

\subsection{Homotopy Coherent Natural Transformations}
We need to define our homotopy coherent natural transformations, which underpin the construction of the mapping $\phi$ we require. The following formulation comes from \cite{Un17}, adapted from \cite{dugger2008primer} to the context of chain complexes.

\begin{definition}
Let $I$ be a small category.
\begin{enumerate}
    \item Let $F \colon I \to \Ch$ be an $I$-diagram in $\Ch$. The standard simplicial resolution $Q_\bullet F$ of $F$ is given by 
    \[
    Q_n F(i) \colon \bigoplus_{[i_0 \to \dots \to i_n \to i] \in N_n (I/i)} F(i_0).
    \]
    We denote its geometric realization by $QF \colon I \to \Ch$.
    \item Let $F,G:I\to \textbf{Ch}$ be two diagrams in $\Ch$, we define the cosimplicial chain complex of homotopy coherent natural transformations $\text{hc}(F,G)^\bullet:\Delta\to \textbf{Ch}$ as
$$ \text{hc}(F,G)^n := \underline{\Ch}(Q_nF, G) = \prod_{\underline{\phi}\in N_n I} \underline{\textbf{Ch}}(F(i_0),G(i_n)), $$
                where the product runs over the nerve $N_\bullet I$ of $I$, that is all strings $[i_0\to \cdots \to i_n]$ of $n$-composable arrows.
    Let $\hc(F,G) \in \Ch$ be the totalization of $\hc^\bullet(F,G)$.
\end{enumerate}
\end{definition}
For every map $\psi: x \to y$ we can find a morphism $A_\psi:F(x) \to G(y)$ of degree one, and for composed morphisms $\theta:x \to y \to z$ we get a map of degree 2, $A_\theta:F(x) \to G(z)$. These maps are not necessarily chain maps, but are subject to certain coherence arguments.

\begin{example}
    Consider the category $I =\{a\rightarrow b\}$, consisting of two objects $a,b$ and one non-identity morphism: $a \to b$. Given $F,G:I\to \textbf{Ch}$, one then finds that:
                \begin{equation*}
                    \begin{split}
                        \text{hc}(F,G)^0 =& \underline{\textbf{Ch}}(F(a),G(a))\oplus \underline{\textbf{Ch}}(F(b),G(b)),\\
                        \text{hc}(F,G)^1 =& \underline{\textbf{Ch}}(F(a),G(b)),\\
                        \text{hc}(F,G)^n =&0, \quad \text{for }n\geq 2.
                    \end{split}
                \end{equation*}
    \emph{Note: }This is the normalized cosimplicial chain complex, by taking the quotient of identity morphisms, the unnormalized complex would contain additional summands in each degree.\\
    We can then find some element $\alpha \in \text{hc}(F,G)^1$ for every pair of elements $(\varphi, \psi) \in \text{hc}(F,G)^0 $ that represents the failure to commute, that is
    $$g\varphi -  \psi f = \pm d \alpha.$$
                    $$
                    \begin{tikzcd}[row sep = huge]
                        F(a)\arrow[d, "\varphi"]\arrow[r,"f"]\arrow[rd, dotted, "\alpha"]& F(b)\arrow[d, "\psi"]\\
                        G(a)\arrow[r, "g"]& G(b)
                    \end{tikzcd}
                    $$
\end{example} 

Let now $I = \Delta^\op$ and let us spell out the definition of $\hc(F,G)$ in more detail. For instance, a $0$-cycle in the chain complex $\hc(F,G)$ is given by $(A^0,A^1,\dots)$ where each $A^m = (A^m_\phi)_{\phi \in N_m(\Delta^\op)}$ is a collection of map $A^m_{[i_0 \to \dots \to i_m]} \colon F(i_0) \to G(i_m)$. They satisfy the recurrence relation
%$$\sum_i(-1)^i \delta^i A^n=(-1)^n d A^{n+1}$$
\begin{equation}
    \label{eqn:recforAs}
\sum(-1)^j\left(\delta^j A^{m-1}\right)_{\underline{\phi}}=(-1)^m d A_{\underline{\phi}}^m
\end{equation}
where $(\delta^jA^{m-1})_{\underline{\phi}}$ considers the contraction of the degree $m$ map $\phi$ into a degree $m-1$ one by ``covering up'' the $j$-th map: [$i_0 \to \cdots \to i_{j-1} \to i_{j+1} \to \cdots \to i_m$]. 
The precise structural maps are

\[\begin{gathered}
(\delta^i A)_{\underline{\phi}}= \begin{cases}
F\left(i_0\right) \xrightarrow{F(i_0 \rightarrow i_1)}F\left(i_1\right) \xrightarrow{A_{d_0 \underline{\phi}}}G\left(i_{n+1}\right) & \text { if } i=0, \\
F\left(i_0\right) \stackrel{A_{d_i \underline{\phi}}}{\longrightarrow} G\left(i_{n+1}\right) & \text { if } 0<i<n+1, \\
F\left(i_0\right) \xrightarrow{A_{d_{n+1} \underline{\phi}}} G\left(i_n\right) \xrightarrow{G(i_n \rightarrow i_{n+1})} G\left(i_{n+1}\right) & \text { if } i=n+1 .\end{cases}
\end{gathered}\]

Let us denote by $\Tot(F) \in \Ch$ the (direct sum) totalization of $F \colon \Delta^\op \to \Ch$. We then obtain a chain map
\begin{equation}\label{eqn:hcacts}
\hc(F,G) \to \underline{\Ch}(\Tot(QF), \Tot(G))).
\end{equation}
The following lemma exhibits an explicit natural section of the map $\Tot(QF) \to \Tot(F)$. Let us define (non-chain) maps
\[
M^k \colon \hc(F,G) \to \prod_{i}\underline{\Ch}(F(i+k), G(i))
\]
by the formula $(M^k)_l = \sum_{i_1, \dots, i_k} \pm A_{[\sigma_{i_1}, \dots, \sigma_{i_k}]}$ where $\sigma_i$ are the face maps in $\Delta^\op$ and 
\[
[\sigma_{i_1}, \dots, \sigma_{i_k}] = [ [l+k] \xrightarrow{\sigma_{i_1}} [l+k-1] \to \dots \to [l+1] \xrightarrow{\sigma_{i_k}} [l] ] \in N_k(\Delta^\op)
\]
\begin{lemma}\label{lm:MsfromAs}
    The maps $(M^0,M^1,\dots)$ define a chain map
    \[
    M \colon \hc(F,G) \to \underline{\Ch}(\Tot(F), \Tot(G)),
    \]
    natural in $G$ and such that the composite
    \[
    \hc(F,G) \to \underline{\Ch}(\Tot(F), \Tot(G)) \to \underline{\Ch}(\Tot(QF), \Tot(G))
    \]
    is naturally homotopic to \eqref{eqn:hcacts}.
\end{lemma}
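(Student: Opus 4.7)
The proof has three parts corresponding to the three assertions. The key structural observation is that $M$ factors as $M(A) = \mathrm{action}(A) \circ s$, where $\mathrm{action} \colon \hc(F,G) \to \underline{\Ch}(\Tot(QF), \Tot(G))$ denotes the map of \eqref{eqn:hcacts} and $s = (s^k)_{k \ge 0}$ with $s^k \colon F([l+k]) \to Q_k F([l])$ the ``face-string'' inclusion sending $x$ to the sum $\sum_{i_1,\dots,i_k} x$ over the summands of $Q_kF([l])$ indexed by strings $[l+k \xrightarrow{\sigma_{i_1}} l+k-1 \to \cdots \to l \xrightarrow{\mathrm{id}} l]$ consisting of $k$ face maps in $\Delta^\op$ followed by an identity in $\Delta^\op/[l]$. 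The defining formula for $(M^k)_l$ is then recovered by unwinding the action of $A$ on such summands: $A_{[\sigma_{i_1},\dots,\sigma_{i_k}]}$ is applied to $x$, followed by $G(\mathrm{id}) = \mathrm{id}$.

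Given this factorization, the first assertion reduces to verifying that $s$ is a chain map $\Tot(F) \to \Tot(QF)$, since $\mathrm{action}$ is a chain map by construction of $\hc(F,G)$. The differential on $\Tot(QF)$ restricted to the image of $s$ decomposes into three pieces: the outer simplicial boundary in the $\Delta^\op$-direction, the bar-style boundary of $Q_\bullet$ that collapses consecutive arrows in the string, and the internal differentials of $F$. The outer piece matches the differential of $\Tot(F)$ term-by-term, and a direct computation using the simplicial identities between iterated face maps in $\Delta^\op$ shows that the middle piece cancels on the image of $s$, leaving precisely $s$ applied to the outer differential. Naturality in $G$ is immediate, since the definition of $s$ is independent of $G$.

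For the final claim, let $p \colon \Tot(QF) \to \Tot(F)$ denote the augmentation. We construct an explicit chain homotopy $H \colon \Tot(QF) \to \Tot(QF)$ of degree $+1$ with $\partial H = \mathrm{id} - s \circ p$, modelled on the standard extra-degeneracy contracting homotopy of the bar resolution $Q_\bullet F$: on a summand of $Q_n F([i])$ indexed by $[i_0 \to \cdots \to i_n \to i]$, $H$ is a signed sum over positions $j \in \{0,\dots,n\}$ in which one splits the composite $i_j \to i$ into a face-map decomposition, inserts it into the string, and replaces the terminal arrow by an identity, imitating the pattern defining $s$. Verifying $\partial H = \mathrm{id} - s \circ p$ is then a simplicial-identity computation in $\Delta^\op/[i]$. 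Postcomposing $H$ with $\mathrm{action}(A)$ yields a chain homotopy from $\mathrm{action}(A)$ to $\mathrm{action}(A) \circ s \circ p = M(A) \circ p$, as required. The principal obstacle throughout is assembling the exact formula for $H$ and tracking the combinatorics of strings in $\Delta^\op/[i]$; working over $\Z_2$ removes the sign-half of the bookkeeping, but the indexing remains the main source of difficulty.
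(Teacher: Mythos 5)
Your overall architecture is sound and matches the skeleton of the paper's argument: both proofs rest on the observation that $M$ is the composite of the action map \eqref{eqn:hcacts} with precomposition by a section $s \colon \Tot(F) \to \Tot(QF)$ of the augmentation $p \colon \Tot(QF) \to \Tot(F)$, given by summing over strings of face maps. The differences are in how the two halves are verified. For the chain-map property, the paper computes $dM^k$ directly on the components and observes that the cross terms vanish because the alternating sum of face maps squares to zero; you instead push the whole verification into showing that $s$ itself is a chain map. These are the same computation in different clothing. For the homotopy statement, the paper is softer: it identifies $\psi = s$ as the image of the identity under $M$ (so that $M$ is precomposition with $\psi$ by naturality in $G$), notes $p\psi = \id$, and appeals to the fact that a section of a homotopy equivalence is a two-sided homotopy inverse, so $\psi p \simeq \id$ and the required natural homotopy follows by precomposition. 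You instead propose to write down an explicit contracting homotopy $H$ with $\partial H = \id - s\circ p$. Your route buys an explicit formula for the homotopy; the paper's buys brevity.

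One caution on your construction of $H$: the summands of $Q_nF(i)$ are indexed by strings $[i_0 \to \cdots \to i_n \to i]$ of \emph{arbitrary} morphisms of $\Delta^\op$, not only face maps, so the step in which you ``split the composite $i_j \to i$ into a face-map decomposition'' does not make sense on a general summand, and the naive extra degeneracy (appending $\id \colon i \to i$) contracts $Q_\bullet F(i)$ onto the unit section $\eta$, which is not a map of $\Delta^\op$-diagrams and differs from $s$. The homotopy you need therefore has to mix the bar-direction extra degeneracy with correction terms in the external $\Delta^\op$-direction, and as sketched your formula does not yet do this. The statement you are after is nonetheless true, and is obtained for free from the paper's argument (or from your own setup) once you know $p$ is a homotopy equivalence and $p\circ s=\id$; I would either supply the corrected explicit $H$ or fall back on that abstract step.
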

\begin{proof}
    By definition of the differential on $\hc(F,G)$ we obtain that
    \[
    d M^k = \left(\sum_i \pm F(\sigma_i) \right) \circ M^{k-1} \pm M^{k-1} \circ \left(\sum_i \pm G(\sigma_i) \right),
    \]
    where all the intermediate terms vanish (using $(\sum_i \pm \sigma^i ) \circ (\sum_j \pm \sigma^j) = 0$). But this is exactly the differential on $\underline{\Ch}(\Tot(F), \Tot(G))$.
    
    For the second part of the statement, let us first set $G = F$ and let $\nu \in \hc(F,F)$ be the element corresponding to the augmentation $QF \to F$. Then we have $M_0(\nu) = \id$ and $M_k(\nu)=0$ for $k > 0$ and hence $\nu$ is sent to the identity. The rest follows from naturality as follows. Let $\psi \colon \Tot(F) \to \Tot(QF)$ be the image of the identity $\underline{\Ch}(QF,QF)$ under $M$. Then $M$ for a general $G$ is given by $\hc(F,G) \to \underline{\Ch}(\Tot(QF), \Tot(G))$ followed with precomposition with $\psi$. But $\psi$ is a section of $\Tot(QF) \to \Tot(F)$ by the previous observation.
\end{proof}

\begin{remark}
The above argument could be streamlined a bit using the following. Let $\ch$ be the $\Z$-linear category with objects the natural numbers and morphisms $\ch(i,i) = \Z$, $\ch(i+1,i) = \Z$ and $\ch(i,j) = 0$ otherwise. Taking unnormalized chains $C_\bullet$ can then be seen as a functor $\ch \to Z\Delta^{\op}$. With this we obtain a map
\[
C_\bullet(QF) = C_\bullet(B(\Z\Delta^\op, \Z\Delta^\op, F)) \leftarrow B(\ch, \ch, C_\bullet(F)) = Q C_\bullet(F),
\]
\end{remark}

Let $F = B^\cyc(N^\bullet(X))$ and $G = N^\bullet(\Map(S^1_\bullet,X))$ be as above.
\begin{theorem}
\emph{\textbf{[Ungheretti]}}
    There exists a map $\phi \in \mathrm{hc}(F,G)$ (canonical up to homotopy) extending the Alexander-Whitney maps $F(k-1) = N^\bullet(X)^{\oo k} \to N^\bullet(X^k) = G(k-1)$. In particular, we get an induced map
    \[
    \Tot(F) \to \Tot(G)
    \]
\end{theorem}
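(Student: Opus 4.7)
The plan is to construct $\phi$ by induction on the simplicial degree $m$ of the coherence data, with base case $\phi^0_{[i]} = AW_{i+1} \colon N^\bullet(X)^{\otimes (i+1)} \to N^\bullet(X^{i+1})$, the Alexander-Whitney chain map. These are chain maps at each object but do not strictly commute with the simplicial structure: face maps on $F$ involve the multiplication on $N^\bullet(X)$ (coming from the cup product) while face maps on $G$ are induced by diagonal and projection maps of $X$. The classical Eilenberg-Zilber theorem provides natural chain homotopies between the two composites one obtains for each coface, and these natural homotopies serve as the components $\phi^1_{[i_0 \to i_1]}$.

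For $m \geq 2$ I proceed inductively. Assuming $\phi^0, \ldots, \phi^{m-1}$ have been defined and satisfy the recurrence \eqref{eqn:recforAs}, a direct computation (using that the $\phi^{k}$ for $k < m$ already satisfy the relation, plus the fact that simplicial differentials square to zero) shows that the expression $\sum_j (-1)^j (\delta^j A^{m-1})_{\underline{\phi}}$ is a $d$-cycle in $\underline{\Ch}(F(i_0), G(i_m))$; we must realise it as a $d$-boundary $(-1)^m d\phi^m_{\underline{\phi}}$, naturally in the string $\underline{\phi} \in N_m(\Delta^\op)$. The key input is the method of acyclic models: both $F(i_0) = N^\bullet(X)^{\otimes (i_0+1)}$ and $G(i_m) = N^\bullet(X^{i_m+1})$ are built from cochains on products of the standard models of $X$, and one has compatible free/acyclic resolutions in the relevant diagram category, so any natural cycle in positive degree bounds, and the bounding element can be chosen naturally.

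The main obstacle is exactly this vanishing of obstructions at each inductive stage: we need a chain-level statement (not merely a homology-level one) to produce the genuine homotopies $\phi^m$, and this is what requires the Eilenberg-Zilber machinery rather than abstract nonsense. Once $\phi$ has been built, the canonicity up to homotopy follows by running the same acyclic models argument on the difference of any two extensions, which again admits a natural primitive. Finally, the induced chain map $\Tot(F) \to \Tot(G)$ is obtained by feeding $\phi$ into Lemma \ref{lm:MsfromAs}, which repackages the collection $(\phi^m)_m$ as a genuine morphism of totalizations.
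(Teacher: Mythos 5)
Your argument is correct in substance, and its skeleton --- induction on the coherence degree $m$, with the stage-$m$ obstruction shown to be a natural cycle that must be bounded naturally --- is exactly the induction underlying Ungheretti's construction. Where you diverge from the paper is in the mechanism used to kill the obstructions: you invoke the method of acyclic models on the chain-level functors $X \mapsto N_\bullet(X^{i_m+1})$ (free on the models $\Delta^n$) and $X \mapsto N_\bullet(X)^{\otimes(i_0+1)}$ (acyclic on those models), then dualize; the paper instead lifts the entire recurrence \eqref{eqn:recforAs} along the map \eqref{eqn:subofhc} into the product of the contractible complexes $\sur(i_0+1)$ of the surjection operad, where contractibility hands you the primitives. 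These are two packagings of the same phenomenon (the contractibility of $\sur(n)$ is precisely the operadic encoding of the acyclic-models argument for natural $n$-ary cochain operations), so your route does prove the theorem; note only that your attribution of the stage-$1$ homotopies to ``the Eilenberg--Zilber theorem'' is slightly off --- the homotopy actually needed there is Steenrod's $\smile_1$, produced by acyclic models rather than by the shuffle/AW homotopy --- and that your canonicity claim, like the paper's, is uniqueness among extensions admitting such natural (equivalently, operadic) lifts rather than among arbitrary elements of $\hc(F,G)$. The practical advantage of the paper's formulation, which your proof would not deliver, is that it produces the $A^m_{\underline{\phi}}$ as explicit elements of $\sur(i_0+1)$; this is indispensable for the rest of the paper, where $A^1_{\sigma_1}$ is identified with $\smile_1 \in \sur(2)$ and $M^2_0$ with an explicit $V \in \sur(3)$, and those identifications drive the computation for $S^2$.
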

More precisely, for an $E_\infty$-operad $\sur$ naturally acting on $N^\bullet(-)$ extending the (associative) Alexander-Whitney product, \cite{Un17} constructs a map of cosimplicial chain complexes
\begin{equation}\label{eqn:subofhc}
\prod_{i_0 \to \dots \to i_n} \sur(i_0+1) \to \prod_{i_0 \to \dots \to i_n} \Ch(N^\bullet(X)^{i_0+1}, N^\bullet(X^{i_n+1}))
\end{equation}
given by the following construction. For $i_0 \to \dots \to i_n$ denote $\phi \colon i_0 \to i_n$ their composition and let $p = (p_1, \dots, p_{i_0+1}) \colon X^{i_n + 1} \to X^{i_0 +1 }$ be the map induced by $\Map(S^1_\bullet,X)$. We then define the map
\begin{align*}
    \sur(i_0+1) &\longrightarrow \Ch(N^\bullet(X)^{i_0+1}, N^\bullet(X^{i_n+1})) \\
    o &\mapsto \left( (a_1, \dots, a_{i_0+1}) \mapsto  \pm o(p_1^*a_1, \dots, p_{i_0+1}^* a_{i_0+1}) \right).
\end{align*}
The product of these maps over $i_0 \to \dots \to i_n$ defines \eqref{eqn:subofhc}.
Using contractibility of $\sur$ he then concludes that the Alexander-Whitney maps extend to a unique up to homotopy element on the left hand side.

\subsection{Unpacking}
Recall that our main interest is in the composition
\[
\Psi \colon \HH_2(N^\bullet(S^2)) \xrightarrow{\cong} H^2(\Tot(G)) = H^2(\Tot(N^\bullet(\Map(S^1_\bullet, S^2))) \to H^2(S^2),
\]
where the second map is induced from the map $\{ \pt \} \to S^1_\bullet$ and hence is given by the projection $\Tot(G) \to G(0)$. Moreover, we have seen that there are two classes; $1 \otimes \alpha \otimes \alpha$ and $\alpha$, both of which can be represented by cycles in the total complex of $F^{\leq 2} = \left(F(2) \to F(1) \to F(0) \right)$. It will thus suffice to unpack the restriction of the above map $M \colon \Tot(F) \to \Tot(G)$ to $\Tot(F^{\leq 2})$. That is we want to describe the maps $M$ in the diagram
$$
\begin{tikzcd}[column sep = large, row sep = large]
N^\bullet(S^2)^{\otimes 3} = F(2) \arrow[d, "\delta",swap]\arrow[r, "M^0_2"]\arrow[rd, "M^1_1"] &  G(2)\arrow[d, "\delta"] = N^\bullet(S^2 \times S^2 \times S^2)\\
N^\bullet(S^2)^{\otimes 2} = F(1) \arrow[d, "\delta",swap]\arrow[r, "M^0_1"',near start] \arrow[rd, "M^1_0",swap] &  G(1)\arrow[d, "\delta = 0"] = N^\bullet(S^2 \times S^2)\\
N^\bullet(S^2) = F(0)\arrow[r, "M^0_0 = \id",swap] &  G(0) = N^\bullet(S^2) \ar[from=uul, crossing over, "M^2_0" description, near start]
\end{tikzcd}
$$
satisfying
\begin{align*}
	dM^0 - M^0d &= 0 \\
	dM^1 \pm M^1d &= \pm \delta M^0 \pm M^0 \delta \\
	dM^2 \pm M^2d &= \pm \delta M^1 \pm M^1 \delta.
\end{align*}

Spelling out the formula in Lemma \ref{lm:MsfromAs} we obtain that $M^0 = A^0$ are given by the Alexander-Whitney products. For the other maps, let us denote by $\sigma_0, \sigma_1$ the two face maps $[1] \to [0] \in \Delta^\op([1],[0])$ and by $\nu_0, \nu_1, \nu_2$ the face maps $[1] \to [2] \in \Delta^\op([2],[1])$. We then obtain from Lemma \ref{lm:MsfromAs}
\begin{align}\label{mm}
M^1_0 &= A^1_{\sigma_0} - A^1_{\sigma_1}, \\
M^1_1 &= A^1_{\nu_0} - A^1_{\nu_1} + A^1_{\nu_2} \label{mm2} \\
M^2_0 &= \sum_{i,j} \pm A_{i,j}
\end{align}
where
\[
A_{i,j} := A^2_{[2] \overset{\nu_i}{\to} [1] \overset{\sigma_j}{\to} [0]}: N^\bullet(X)^{\otimes 3} \to N^\bullet(X).
\]

The $A^n$'s (and hence the $M^n$'s) are iteratively constructed as the (homotopically) unique solution of equation \eqref{eqn:recforAs} holding on the left hand side of \eqref{eqn:subofhc}. We now describe a particular choice of (some of) the $A^n$'s. For simplicity of notation we will (abusively) denote an element in the source of \eqref{eqn:subofhc} by its image. That abuse can alternatively be justified by the fact that \eqref{eqn:subofhc} is injective for general enough $X$.

\begin{lemma}
    We can choose
    \begin{align*}
        A^1_{\sigma_0} &= 0 \\
        A^1_{\sigma_1} &= \smile_1 \colon N^\bullet(X)^{\otimes 2} \to N^\bullet(X),
    \end{align*}
    where $\smile_1 \in \sur(2)$ is satisfying
    \[
    d(\smile_1) = \smile - \smile \circ (12),
    \]
    where $\smile \in \sur(2)$ is the associative Alexander-Whitney product and $\smile \circ (12)$ is the image under the transposition $(12)$ acting on $\sur(2)$.
\end{lemma}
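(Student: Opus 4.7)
The plan is to verify the defining equation \eqref{eqn:recforAs} for $m = 1$ at each of the two 1-simplices $\sigma_0, \sigma_1 \colon [1] \to [0]$ of $\Delta^{\op}$. Unpacked, the equation reads
\[
A^0_{[0]} \circ F(\sigma_i) \;-\; G(\sigma_i) \circ A^0_{[1]} \;=\; -\,d A^1_{\sigma_i},
\]
where $A^0_{[n]} \colon N^\bullet(X)^{\oo n+1} \to N^\bullet(X^{n+1})$ is the iterated Alexander-Whitney cross product. Existence of some $A^1_{\sigma_i} \in \sur(2)$ solving this is guaranteed by the acyclicity argument of \cite{Un17}; the content of the lemma is to exhibit a specific choice coming from the cup-one product.

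First I would spell out the four face maps involved. On the cyclic bar side, $F(\sigma_0), F(\sigma_1) \colon N^\bullet(X)^{\oo 2} \to N^\bullet(X)$ are $a \oo b \mapsto a \smile b$ and, via the cyclic face, $a \oo b \mapsto b \smile a$ respectively. On the $G$ side, the two cosimplicial face maps $d^0, d^1 \colon X \to X^2$ come from the two simplicial face maps $S^1_1 \to S^1_0$; since $S^1_\bullet = \Delta^1/\partial\Delta^1$ has a unique $0$-simplex, both of these face maps of $S^1_\bullet$ are constant, so both cosimplicial face maps of $\Map(S^1_\bullet, X)$ reduce to the diagonal $\Delta \colon X \to X^2$. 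Consequently $G(\sigma_0) = G(\sigma_1) = \Delta^\ast \colon N^\bullet(X^2) \to N^\bullet(X)$, and both compositions $G(\sigma_i) \circ A^0_{[1]}$ equal the cup product $\smile$, by the very definition of the cup product as pullback of the cross product along the diagonal.

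With the face maps identified, the two cases are immediate. For $\sigma_0$ both sides of the recurrence equal $\smile$, so $A^1_{\sigma_0} = 0$ works. For $\sigma_1$ the right-hand side is still $\smile$, but now $A^0_{[0]} \circ F(\sigma_1) = \smile \circ (12)$; the recurrence becomes $\smile \circ (12) - \smile = -d A^1_{\sigma_1}$, i.e.\ $d A^1_{\sigma_1} = \smile - \smile \circ (12)$, which is exactly the defining relation of $\smile_1 \in \sur(2)$. Hence $A^1_{\sigma_1} = \smile_1$ is the desired choice. The one step requiring real care is the identification of the cosimplicial face maps of $\Map(S^1_\bullet, X)$: once it is seen that both collapse onto the diagonal, the asymmetry that produces $\smile_1$ is located entirely on the Hochschild side, and the rest of the argument is a bookkeeping exercise over $\Z_2$.
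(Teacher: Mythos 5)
Your proof is correct and follows essentially the same route as the paper: identify the two Hochschild face maps ($a\oo b\mapsto a\smile b$ and the cyclic one $a\oo b\mapsto \pm\, b\smile a$), observe that both cosimplicial face maps on the $\Map(S^1_\bullet,X)$ side collapse to $\Delta^*$ so that $\Delta^*\circ A^0_{[1]}=\smile$, and conclude that the $\sigma_0$-square commutes on the nose while the $\sigma_1$-defect is $\smile-\smile\circ(12)$, whose primitive in the contractible $\sur(2)$ is $\smile_1$. This matches the paper's argument, including the appeal to Ungheretti's lifting of the defect to a closed element of $\sur(2)$.
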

\begin{proof}
    Recall from the proof in \cite{Un17} that $A^1_{\sigma_i}$ are constructed by first verifying that $\delta^i \circ M_0^0 - M_1^0 \circ \delta^i \colon N^\bullet(X)^{\otimes 2} \to N^\bullet(X)$ is induced by an element in $\sur(2)$ which is closed in $\sur(2)$. Here $\delta^i$ are the coboundary maps on $F$ and $G$, respectively. Then $A^1_{\sigma_i}$ is a primitive of that element (which exists by contractibility of $\sur(2)$).
    For $i=0$ we are getting that $\delta^0 - M_1^0 \delta^0 = 0$. More precisely, we check that the following diagram commutes \vspace{-1.8em}
    \begin{multicols}{2}
    
     \[
    \begin{tikzcd}
        N^\bullet(X)^{\otimes 2} \arrow{r}{M^0_1} \arrow{d}{\delta^0} & N^\bullet(X \times X) \arrow{d}{\Delta^*}  \\
        N^\bullet(X) \arrow{r}{=} & N^\bullet(X)
    \end{tikzcd}
    \]
    
    \[
    \begin{tikzcd}
        a\oo b \arrow{r} \arrow{d}  &\pi^*_{1}(a) \smile \pi^*_{2}(b) \arrow{d} \\
        a \smile b \arrow{r} & a\smile b
    \end{tikzcd}
    \]   
    \end{multicols}
    where $\pi^*_{i}$  denotes the pullback from the $i$-th copy of $X$ onto $X \xx \cdots \xx X$ and the map $\delta^0$ is the $0$-th boundary in the Hochschild complex, i.e. the Alexander-Whitney multiplication on $N^\bullet(X)$. For the map $\Delta^*$ we have that by naturality of the cup product
    \begin{equation*}
        \begin{split}
            \Delta^*(\pi^*_{1}(a) \smile \pi^*_{2}(b)) &= \Delta^*(\pi^*_{1})(a) \smile \Delta^*(\pi^*_{2})(b) \\
            &= (\pi_{1} \circ \Delta)^*(a) \smile (\pi_{2} \circ \Delta)^*(b) \\
            &= a \smile b.
        \end{split}
    \end{equation*}
    For the case $i=1$ a similar computation using that the $1$-st boundary on the Hochschild complex is the multiplication in opposite order shows that
    \[
    (\delta_G^1 - M_0^1 \delta_F^0)(x,y) = \pm y \smile x \pm x \smile y.
    \]
\end{proof}
\begin{remark}
    Note that $\smile_1$ exists by contractibility of $\sur(2)$. There is also a well-known formula for it (see \cite{steenrod1947products}).

\end{remark}

\begin{lemma}
    The map $M^2_0$ is induced by $V \in \sur(3)$ where $V$ satisfies
    \[
    dV = U
    \]
    and $U \in \sur(3)$ is the operation
    \[
    U(x,y,z) := \pm (x \smile y) \smile_1 z - \pm x \smile_1 (y \smile z) + \pm (z \smile x) \smile_1 y.
    \]
\end{lemma}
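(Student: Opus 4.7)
The strategy is to expand $dM^2_0$ via the recurrence \eqref{eqn:recforAs} and match the result with $U$ at the level of $\sur(3)$. Since each $A_{i,j}$ is, by construction, induced via \eqref{eqn:subofhc} from an element $V_{i,j} \in \sur(3)$, the combination $V = \sum_{i,j} \pm V_{i,j}$ is an operadic lift of $M^2_0$; because all the cancellations and substitutions below take place at the operadic level (or equivalently, because \eqref{eqn:subofhc} is injective for general enough $X$), it suffices to identify $dV$ with $U$ as an operation on $N^\bullet(X)^{\otimes 3}$.

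First I would apply \eqref{eqn:recforAs} to each $A_{i,j} = A^2_{[\nu_i, \sigma_j]}$, producing the three types of terms
\[
\pm A^1_{\sigma_j} \circ F(\nu_i), \qquad \pm A^1_{\nu_i \sigma_j}, \qquad \pm G(\sigma_j) \circ A^1_{\nu_i}.
\]
Summing with the signs fixed by \eqref{mm} and \eqref{mm2}, and using $\delta_F = \sum_i \pm F(\nu_i)$ and $\delta_G = \sum_j \pm G(\sigma_j)$, the outer families rearrange into $M^1_0 \circ \delta_F$ and $\delta_G \circ M^1_1$. The middle family $\sum_{i,j} \pm A^1_{\nu_i \sigma_j}$ vanishes: the simplicial identity $d_i d_j = d_{j+1} d_i$ for $i \leq j$ pairs up the six compositions $[2] \to [0]$ in $\Delta^\op$ into three duplicated pairs, which sum to zero over $\Z_2$.

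Second, I would evaluate each surviving piece directly. By the preceding lemma $M^1_0(x, y) = x \smile_1 y$, and the Hochschild boundary is $\delta_F(a \otimes b \otimes c) = (a \smile b) \otimes c + a \otimes (b \smile c) + (c \smile a) \otimes b$, so
\[
M^1_0 \circ \delta_F(a, b, c) = (a \smile b) \smile_1 c + a \smile_1 (b \smile c) + (c \smile a) \smile_1 b = U(a, b, c).
\]
For the other piece $\delta_G \circ M^1_1$, note that both coface maps $[0] \rightrightarrows [1]$ in $\Delta$ land in $\partial\Delta^1$, so after quotienting to $S^1_\bullet = \Delta^1/\partial\Delta^1$ the two face maps $S^1_1 \to S^1_0$ both collapse to the basepoint. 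Hence both induced face maps $G(1) \to G(0)$ equal the diagonal pullback $\Delta^*$, so $\delta_G$ vanishes on $G(1)$ and $\delta_G \circ M^1_1 = 0$.

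Putting everything together, $dM^2_0$ acts exactly as $U$ on $N^\bullet(X)^{\otimes 3}$, and lifting along \eqref{eqn:subofhc} this gives $dV = U$ in $\sur(3)$. The most delicate point in the plan is the cancellation of the middle terms $A^1_{\nu_i \sigma_j}$ together with the vanishing of $\delta_G$ — both rely on the combinatorics of $S^1_\bullet$ and the favorable $\Z_2$-coefficient setup; without $\Z_2$ coefficients one would need a careful Koszul-sign analysis in place of the present clean cancellation.
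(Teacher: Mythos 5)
Your proof is correct and follows essentially the same route as the paper's: expand the recurrence for the $A_{i,j}$, observe that the two face maps $G(1) \to G(0)$ both equal $\Delta^*$ so the $\delta_G \circ M^1_1$ contribution cancels (this is the paper's cancellation of the $o_1$ terms over $j=0,1$), and identify the surviving piece with $\smile_1 \circ\, \delta_F = U$, lifting to $\sur(3)$ via injectivity of \eqref{eqn:subofhc}. One small point in your favour: you explicitly dispose of the middle terms $A^1_{\sigma_j \circ \nu_i}$ of the recurrence by pairing the six factorizations of the three maps $[2]\to[0]$ mod $2$, whereas the paper's displayed formula for $dA_{i,j}$ silently omits these terms.
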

\begin{proof}
    Note that $d M^2_0 = -M_0^1 \delta = A^1_{\sigma^1} \delta =  \smile_1 \circ \delta$. It remains to verify that this equation holds in $\sur(3)$ and not merely in $\Ch(F(2), G(0))$. By definition we have that $M^2_0 = \sum_{i,j} \pm A_{i,j}$ and the $A_{i,j}$ are induced by elements in $\sur(3)$ which we abusively denote by the same symbols. The $A_{i,j}$ satisfy
    \[
    dA_{i,j} = \pm \delta_G^j A^1_{\nu_i} \pm A^1_{\sigma_j} \delta_F^i.
    \]
    According to the proof in \cite{Un17} the right hand side is induced by an closed element in $\sur(3)$. Let us find the corresponding element for $(i,j) = (1,1)$ and $(i,j) = (i,0)$, the other case being similar. For the first term $\delta_G^1 A^1_{\nu_1}$, let $A^1_{\nu_1}$ be induced by $o_1 \in \sur(3)$ by the formula
    \begin{align*}
        N^\bullet(X)^{\otimes 3} &\longrightarrow N^\bullet(X \times X) \\
        (a,b,c) &\mapsto o_1(p_1^*(a), p_2^*(b), p_3^*(c)).
    \end{align*}
    where $(p_1,p_2,p_3)(x,y) = (x,y,y)$. As $\delta_G^1 = \Delta^*$ we compute that
    \begin{align*}
        \Delta^* o_1(p_1^*(a), p_2^*(b), p_3^*(c)) = o_1(\Delta^*p_1^*(a), \Delta^*p_2^*(b), \Delta^*p_3^*(c)) =o_1(a,b,c),
    \end{align*}
    and hence $\delta_G^1 A^1_{\nu_1}$ is induced by $o_1$.
    For the second term we directly see (using that $A^1_{\sigma_1} = \smile_1)$ that
    \[
    A^1_{\sigma_1} \delta_F^1 (a,b,c) = a \smile_1 (b \smile c).
    \]
    We conclude that $dA_{1,1}$ is induced by the operation
    \[
    (a,b,c) \mapsto \pm o_1(a,b,c) \pm (a \smile_1 (b \smile c)).
    \]
    Likewise, we obtain that $dA_{1,0} = \pm \delta_G^0 A^1_{\nu_1}$ is induced by
    \[
    (a,b,c) \mapsto \pm o_1(a,b,c)
    \]
    where we used that $\delta_G^0 = \delta_G^1$. In particular, we observe that the $o_1$ term cancels when we sum over $j=0,1$ and we obtain the desired result.
    
\end{proof}

\section{Surjection Operad}
We now wish to solve the equation
\[
dV = U \in \sur(3),
\]
explicitly and evaluate $V(1 \otimes \alpha \otimes \alpha)$ for $X = S^2$.
We shall use the surjection operad, as defined in \cite{Be04}, as the model for $\sur$.

\begin{definition}[Surjection Operad]
    The surjection operad $\sur$ is the operad where each module $\sur(r)_d$ is generated by the non-degenerate surjections $u:\{1, \dots, r+d \} \to \{ 1, \dots, r \}$. Non-surjective maps represent the zero element in $\sur(r)_d$. Each map $u:\{1, \dots, r+d \} \to \{ 1, \dots, r \}$ can be represented by a sequence $(u(1), \dots, u(r+d))$. A surjection is degenerate when for some $1 \leq i < r+d$ we have $u(i) = u(i+1)$. \\

    \noindent The differential is given by $\delta: \sur(r)_{*} \to \sur(r)_{*-1}$
    $$\delta(u(1), \dots, u(r+d)) = \sum_{i=1}^{r+d} \pm (u(1), \dots, \widehat{u(i)}, \dots ,u(r+d))$$
    which gives the structure of a complex.

\end{definition}

\noindent Sign conventions are determined according to the table arrangement of the surjections:

\noindent The table arrangement of $u \in \sur(r)_d $ is the splitting of $u$ into subsequences 
$$u_i = (u(r_0 + \cdots + r_{i-1} + 1), u(r_0 + \cdots +r_{i-1} + 2), \dots ,u(r_0 + \cdots + r_i ))$$
such that every $u(r_0 + \cdots + r_i )$ (a \emph{caesura}) is not the last occurrence of a value $k = 1, \dots, r$, where $r_0, \dots, r_d \geq 1$ and $r_0 + \cdots + r_d = r+d$ which we then arrange into a table, which at row $i$ contains the subsequence $u_i$
\begin{equation*}
   u(1), \ldots, u(r+d)= \left\lvert \begin{array}{cccc}
u_0(1), & \cdots & u_0\left(r_0-1\right), & u_0\left(r_0\right), \\
u_1(1), & \cdots & u_1\left(r_1-1\right), & u_1\left(r_1\right), \\
\vdots & & \vdots & \vdots \\
u_{d-1}(1), & \cdots & u_{d-1}\left(r_{d-1}-1\right), & u_{d-1}\left(r_{d-1}\right), \\
u_d(1), & \cdots & u_d\left(r_d\right) .
\end{array}
\right .
\end{equation*}

We then mark the caesuras 
$$u_0(r_0), \dots ,u_i(r_i), \dots , u_d(r_d)$$ 
with alternating signs along the table, and any element in $u_d$ which has occurred beforehand is given the opposite sign to the previous occurrence. Values of $k$ which occur only once will lead to degeneracy when removed so will equal zero and require no sign. When some $u(i)$ is removed, we insert the corresponding sign from the table arrangement.

\begin{example}
    Consider the element $(4,3,1,2,1,3,5,2) \in \sur(5)_3$. The table arrangement is given by 
    \begin{equation*}
  (4,3,1,2,1,3,5,2)= \left\lvert \begin{array}{ccc}
4,& 3^+, &\\
1^-,& &\\
2^+,& &\\
1^-,& &\\
3^-,& 5,& 2^- .
\end{array}
\right .
\end{equation*}
    The differential is then given by
    \begin{align*}
        \delta(4,3,1,2,1,3,5,2)= (4,1,2,1,3,5,2) &- (4,3,2,1,3,5,2) - (4,3,1,2,3,5,2) \\
        &- (4,3,1,2,1,5,2) - (4,3,1,2,1,3,5).
    \end{align*}

\end{example}

%% \noindent We also have composition within this operad.

\begin{definition}
Operadic composition is a map  $\sur(r)_d \otimes \sur(s)_e \to \sur(r+s-1)_{d+e}$ for a product $u \circ_k v \in \sur(r+s-1)_{d+e}$ by substituting occurrences of $k$ in $(u(1) \cdots u(r+d))$ with elements of $(v(1), \cdots, v(s+e))$.

Assume that $k$ has $n$ occurrences in $(u(1), \cdots ,u(r+d))$. We then consider every splitting of 
$(v(1), \cdots, v(s+e))$ into n components
$$
\left(v\left(j_0\right), \ldots, v\left(j_1\right)\right) \quad\left(v\left(j_1\right), \ldots, v\left(j_2\right)\right) \quad \cdots \quad\left(v\left(j_{n-1}\right), \ldots, v\left(j_n\right)\right)
$$
where $1=j_0 \leq j_1 \leq j_2 \leq \ldots \leq j_{r-1} \leq j_n=s+e$. Then for every value $i_m$ such that $u(i_m) = k$, replace it by the $m$-th component $\left(v\left(j_{m-1}\right) , \ldots, v\left(j_m\right) \right)$, we then increase the terms $v(j)$ by $k-1$, and terms $u(i) > k$ by $s-1$. We then take the sum of the results of all possible splittings, along with a sign. 
This sign is calculated by first giving to each component $(u_{i-1}, \dots , u_i)$ and $\left(v\left(j_{m-1}\right) , \ldots, v\left(j_m\right) \right)$ a degree, which is the amount of rows in the table arrangement it intersects, minus one. Then we calculate the sign of the permutation of 
$$(u(i_{0}), \dots ,u(i_1)) \cdots (u(i_{n}), \dots ,u(i_{n+1})) (v(j_0), \dots, v(j_1)) \cdots (v(j_{n-1}), \dots ,v(j_n)) $$
to 
$$(u(i_{0}), \cdots, u(i_1))(v(j_{0}), \cdots ,v(j_1)) \cdots  (v(j_{n-1}), \cdots, v(j_n)) (u(i_{n}), \cdots, u(i_{n+1}))  $$
which yields the sign of the composition, where the transposition of components of degree $p$ and $q$ gives sign $(-1)^{pq}$.
\end{definition}

\begin{example}
We calculate $(2,3,2,1) \circ_2 (4,3,4,1,2)$.
We have five possible splittings of $v = (4,3,4,1,2)$, which we must calculate the degree of. Using the table arrangement we get
\begin{equation*}
  (4,3,4,1,2)= \left\lvert \begin{array}{cccc}
4,&&&\\
3,&4,&1,&2.
\end{array}
\right .
\end{equation*}
We denote the degree of each term by a subscript. For instance,  $(4,3,4,1)$ intersects both lines of the table arrangement. Hence this element will have degree $1$. The five possible splittings of $v$ are given by
$$(4)_0(4, 3, 4, 1, 2)_1$$
$$(4, 3)_1(3, 4, 1, 2)_0$$
$$(4, 3, 4)_1(4, 1, 2)_0$$
$$(4, 3, 4, 1)_1(1, 2)_0$$
$$(4, 3, 4, 1, 2)_1(2)_0$$
We finally consider all permutations required. In particular, for the second splitting above we get the following permutation:
$$(2)_0(2,3,2)_1(2,1)_0(4,3)_1(3,4,1,2)_0 \mapsto (2)_0(4,3)_1(2,3,2)_1(3,4,1,2)_0(2,1)_0$$
which would add a sign of $-1$. Thus, once we consider the additions required, we get that
\begin{align*}
    (2,3,2,1) \circ_2 (4,3,4,1,2) = (5,6,5,4,5,2,3,1) &- (5,4,6,4,5,2,3,1) - (5,4,5,6,5,2,3,1 \\
    & - (5,4,5,2,6,2,3,1) - (5,4,5,2,3,6,3,1)
\end{align*}
\end{example}

\begin{rem}
    We represent the cup product, $x \smile y$ as $(1,2)$ in the surjection operad and the 1-cup product $x \smile_1 y$ as $(1,2,1)$
\end{rem}
\begin{prop}\label{surrep}
    We can represent 
    $$U = (1,3,1,2) + (1,2,3,2) - (1,2,3,1) + (3,2,3,1) + (3,1,2,1)$$
\end{prop}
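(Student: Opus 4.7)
The plan is to unpack the definition of $U$ by computing each of the three operadic compositions explicitly, using the conventions $\smile = (1,2)$ and $\smile_1 = (1,2,1)$ in the surjection operad $\sur$.

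First, I would compute $(x \smile y) \smile_1 z = (\smile_1 \circ_1 \smile)(x,y,z)$, which amounts to $(1,2,1) \circ_1 (1,2)$. The value $1$ occurs twice in $(1,2,1)$, so the operadic composition rule enumerates the two overlapping splittings of $(1,2)$ into a pair of subsequences, namely $\{(1),(1,2)\}$ and $\{(1,2),(2)\}$. Substituting (with the shift $k-1 = 0$ on $v$, and the unique $2$ in $u$ shifting to $3$ since $s-1 = 1$) yields the two surjections $(1,3,1,2)$ and $(1,2,3,2)$.

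Next, I would compute $x \smile_1 (y \smile z) = (\smile_1 \circ_2 \smile)(x,y,z) = (1,2,1) \circ_2 (1,2)$. Here $2$ appears exactly once in $(1,2,1)$, so there is a unique splitting of $(1,2)$, and substituting the relabeled sequence $(2,3)$ produces the single surjection $(1,2,3,1)$.

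For the third term $(z \smile x) \smile_1 y$, I would observe that it is obtained from $(x \smile y) \smile_1 z$ by precomposing with the $S_3$-action sending the input tuple $(a,b,c) \mapsto (c,a,b)$, which translates into relabeling the values of a surjection via $1 \mapsto 3$, $2 \mapsto 1$, $3 \mapsto 2$. Applying this entrywise to the surjections $(1,3,1,2)$ and $(1,2,3,2)$ produces $(3,2,3,1)$ and $(3,1,2,1)$ respectively. Summing these five contributions (with signs that we need not track because we work over $\Z_2$) gives exactly
\[
U = (1,3,1,2) + (1,2,3,2) - (1,2,3,1) + (3,2,3,1) + (3,1,2,1),
\]
as claimed. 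The main bookkeeping obstacle lies in the table-arrangement sign conventions for the compositions and the symmetric action, but these can be ignored in the present mod-$2$ setting, so the proof reduces to the combinatorial enumeration outlined above.
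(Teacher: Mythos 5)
Your proposal is correct and follows essentially the same route as the paper: the paper's proof is a direct computation of the operadic compositions (it explicitly notes that the first two terms arise from $(1,2,1)\circ_1(1,2)$ representing $(x\smile y)\smile_1 z$), and you carry out the same computation for all three terms, handling the cyclically permuted term via the symmetric-group relabeling $1\mapsto 3$, $2\mapsto 1$, $3\mapsto 2$, which is exactly what is needed. The observation that signs may be ignored over $\Z_2$ is consistent with the paper's stated conventions.
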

\begin{proof}
    This can be calculated directly using the operadic composition for the surjection operad. For example, the first two terms are from
    $$(1,2,1) \circ_1 (1,2)$$ which is our representation of $(x \smile y) \smile_1 z$
\end{proof}

We are now ready to solve $dV = U$ for $V$ using similar ideas to \cite{mcclure2003multivariable} by appending the digit 3 to each of our terms to give a solution
\begin{prop}\label{sum}
    We can take
    $$V = (3,1,3,1,2) +(3,1,2,3,1) - (3,1,2,3,1)$$
\end{prop}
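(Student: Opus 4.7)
The plan is to verify $dV = U$ by a direct calculation of the differential on each summand of $V$, following the strategy of \cite{mcclure2003multivariable}. The idea behind the choice of $V$ is that each summand is obtained from a term of $U$ by inserting a $3$ at the front of the sequence; then one term of the operadic differential $\delta$, namely the one that deletes the leading $3$, reproduces the original term of $U$, while the rest are expected to either vanish (through degeneracy or non-surjectivity) or cancel pairwise against contributions from the other summands.

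Concretely, I would proceed in three steps. First, for each surjection $s = (s(1),\dots,s(5))$ appearing as a summand of $V$, write out its table arrangement and apply the differential
\[
\delta(s) = \sum_{i=1}^{5} \pm (s(1),\dots,\widehat{s(i)},\dots,s(5)),
\]
using the caesura rule to fix signs and immediately discarding any term in which two consecutive symbols coincide (degenerate) or in which some value of $\{1,2,3\}$ fails to appear (non-surjective, hence zero in $\sur(3)$). Second, collect the surviving terms across all summands of $V$ and identify the pairs of identical surjections arising from different summands; under the paper's convention that all signs are tracked modulo $2$, such duplicated pairs cancel. Third, compare the remaining non-degenerate terms with the explicit expression for $U$ in Proposition \ref{surrep} and match them one-to-one.

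The main obstacle is purely bookkeeping: tracking the signs assigned by the caesura rule to each deletion, and correctly discarding the many degenerate or non-surjective summands produced by $\delta$. Since the ambient computation is over $\Z_2$, this reduces the verification to a finite check that the multisets of non-degenerate surjections on each side agree. Once this is confirmed, the identity $dV = U$ in $\sur(3)_1$ follows, and combined with the previous lemma it gives a completely explicit formula for $M^2_0$ as the operation induced by $V$, which we will then evaluate on $1 \otimes \alpha \otimes \alpha$ in the next section.
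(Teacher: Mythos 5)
Your proposal is exactly the paper's (one-line) proof: a direct computation of $\delta$ on each summand of $V$, discarding degenerate and non-surjective deletions and cancelling the leftover terms $(3,1,3,2)$ and $(3,1,2,3)$ in pairs over $\Z_2$, which indeed recovers $U$. One caveat you would hit when actually running the check: the displayed $V$ contains a typo --- the middle summand must be $(3,1,2,3,2)$ rather than a second copy of $(3,1,2,3,1)$ (as the subsequent $AW$ evaluation confirms), since otherwise the two identical terms cancel and $d(3,1,3,1,2) \neq U$.
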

\begin{proof}
    Standard calculation of the differential verifies it satisfies the relation $dV = U$
\end{proof}

As explained in \cite{Be04} the surjection operad naturally acts on normalized chains by a generalized Alexander-Whitney formula (called the interval cut operation)
\[
AW \colon \sur(n) \otimes N_\bullet(X) \to N_\bullet(X)^{\otimes n}
\]
and hence by taking the linear dual the element $V$ defines
\[
M^2_0 \colon N^\bullet(X)^{\otimes 3} \to N^\bullet(X),
\]
which we wish to evaluate on $X = S^2$.

We compute $AW(V)$ on the unique non-degenerate 2-simplex, $e_2 \in N^2(S^2)$.
\begin{prop}
    \begin{align*}
        AW((3,1,3,1,2))(e_2) &= - e_2 \otimes e_0 \otimes e_2 \\
        AW((3,1,2,3,2))(e_2) &= + e_0 \otimes e_2 \otimes e_2 \\
        AW((3,1,2,3,1))(e_2) &= - e_2 \otimes e_0 \otimes e_2.
    \end{align*}
\end{prop}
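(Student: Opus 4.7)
The plan is to apply the interval cut (generalized Alexander-Whitney) formula of Berger-Fresse directly. For $u = (u(1), \dots, u(r+d)) \in \sur(r)_d$ acting on a $p$-simplex $\sigma = [v_0, \dots, v_p]$, the operation $AW(u)(\sigma)$ is a signed sum over non-decreasing cut sequences $0 = a_0 \leq a_1 \leq \cdots \leq a_{r+d} = p$; each term is $\pm\, \sigma_1 \otimes \cdots \otimes \sigma_r$, where $\sigma_k$ is obtained by concatenating, over positions $i$ with $u(i) = k$ taken in increasing order, the vertex strings $(v_{a_{i-1}}, v_{a_{i-1}+1}, \dots, v_{a_i})$. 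A term vanishes as soon as some $\sigma_k$ has a repeated consecutive vertex.

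Two simplifications coming from $X = S^2$ with its minimal triangulation sharply restrict the computation. First, the only non-degenerate simplices on $S^2$ are $e_0$ and $e_2$, so each tensor factor in the output must be either the full face $(v_0, v_1, v_2) = e_2$ or a single vertex $e_0$. Second, whenever a value $k$ occurs at two non-adjacent positions of $u$, the factor $\sigma_k$ is a concatenation of at least two vertex strings and therefore has at least two entries in its vertex sequence, so it can never be a non-degenerate zero-simplex on $S^2$. In each of the three surjections considered exactly one value appears at an isolated position (value $2$ in the first and third, value $1$ in the second), which already forces the $e_0$-slot to lie precisely where the proposition asserts.

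A direct enumeration of $(a_1, a_2, a_3, a_4) \in \{0,1,2\}^4$ with $a_i \leq a_{i+1}$, subject to non-degeneracy of each $\sigma_k$, pins down a unique cut sequence per case: $(0,1,2,2)$ for $(3,1,3,1,2)$, $(0,0,1,2)$ for $(3,1,2,3,2)$, and $(0,1,1,2)$ for $(3,1,2,3,1)$. In each case one reads off line by line that the three factors become either the full $(v_0, v_1, v_2) = e_2$ or a single vertex $v_j = e_0$, producing the tensor product claimed in the statement.

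The remaining and main obstacle is the sign attached to each term, which I would compute from the Berger-Fresse table arrangement: the alternating caesura signs in the leftmost column, combined with the signs attached to repeated values in the bottom row, together with the sign of the shuffle required to bring the cut data into canonical order. Carrying this bookkeeping through for the three cut sequences above yields the signs $-, +, -$ displayed in the statement. Over $\Z_2$ these signs are of course trivial, but they are retained so that the chain-level identity $dV = U$ of Proposition \ref{sum} remains manifestly satisfied.
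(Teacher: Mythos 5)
Your proposal is correct and follows essentially the same route as the paper: both apply the Berger--Fresse interval cut operation to $e_2$ and use the fact that on the minimal triangulation of $S^2$ every nonzero tensor factor must be $e_0$ or $e_2$, which (together with the degree count) forces the unique nonzero cut sequence in each case. Your sign discussion is only sketched rather than carried out, but the paper's own proof is equally brief on this point and the signs are immaterial over $\Z_2$.
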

\begin{proof}
We calculate these maps using the interval cut operation of \cite{Be04}. For example, we consider the interval cut associated to $(3,1,2,3,1)$:
\begin{figure}[h]
    \centering
    \includegraphics[width = 0.4\linewidth]{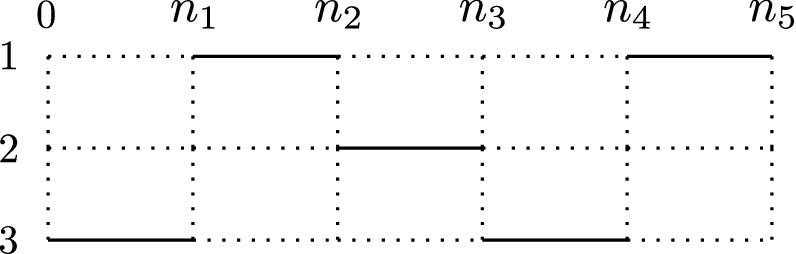}
    \caption{Interval cut of $(3,1,2,3,1)$.}
    \label{surjectionoperad}
\end{figure}

which gives 
$$\pm \Delta(n_1 \textbf{\_\hspace{-.1em}\_\hspace{-.1em}\_\hspace{-.1em}\_} n_2, n_4 \textbf{\_\hspace{-.1em}\_\hspace{-.1em}\_\hspace{-.1em}\_} n_5) \oo \Delta(n_2 \textbf{\_\hspace{-.1em}\_\hspace{-.1em}\_\hspace{-.1em}\_} n_3) \otimes \Delta(0 \textbf{\_\hspace{-.1em}\_\hspace{-.1em}\_\hspace{-.1em}\_} n_1,n_3 \textbf{\_\hspace{-.1em}\_\hspace{-.1em}\_\hspace{-.1em}\_} n_4)$$
where we then use the dimension of these simplices and the uniqueness of non-degenerate simplices to give
$$AW((3,1,2,3,1))(e_2) = \pm e_2 \otimes e_0 \otimes e_2$$
and we get the sign $(-1)$ from the permutation associated to
$\tau: (3,1,2,3,1) \mapsto (1,1,2,3,3)$
to give 
$$AW((3,1,2,3,1))(e_2) = - e_2 \otimes e_0 \otimes e_2$$
The other two computations follow similar steps.
\end{proof}

\begin{cor}
    $M^2_0(1 \otimes \alpha \otimes \alpha) = \alpha.$
\end{cor}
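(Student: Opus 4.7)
The plan is to deduce the corollary essentially as a direct bookkeeping exercise from the previous proposition, using the definition of $M^2_0$ as the linear dual of the operation $AW(V) \colon N_\bullet(S^2) \to N_\bullet(S^2)^{\otimes 3}$ determined by $V \in \sur(3)$. More precisely, for any cochain $\beta \in N^\bullet(S^2)$ and simplex $\sigma$, we have $M^2_0(a \otimes b \otimes c)(\sigma) = (a \otimes b \otimes c)(AW(V)(\sigma))$, so the corollary reduces to evaluating this pairing on the two nondegenerate simplices $e_0$ and $e_2$ of $S^2$.

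First I would assemble $AW(V)(e_2)$ by linearity from the three values computed in the previous proposition, namely
\[
AW(V)(e_2) = \pm e_2 \otimes e_0 \otimes e_2 \pm e_0 \otimes e_2 \otimes e_2 \pm e_2 \otimes e_0 \otimes e_2.
\]
Working over $\Z_2$ the two copies of $e_2 \otimes e_0 \otimes e_2$ cancel, leaving $AW(V)(e_2) = e_0 \otimes e_2 \otimes e_2$. Pairing with $1 \otimes \alpha \otimes \alpha$ then yields $1(e_0)\cdot \alpha(e_2) \cdot \alpha(e_2) = 1$, so $M^2_0(1 \otimes \alpha \otimes \alpha)(e_2) = 1 = \alpha(e_2)$.

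Second, I would verify that $M^2_0(1 \otimes \alpha \otimes \alpha)(e_0) = 0$: since $V$ has arity three and degree two, the interval cut of any $0$-simplex $e_0$ must distribute two positive-dimensional sub-simplices among three factors, so at least one factor receives a positive-dimensional simplex which $1$ annihilates, and in any case the cup factor $\alpha(e_0) = 0$ kills the remaining terms. Thus $M^2_0(1 \otimes \alpha \otimes \alpha)$ agrees with $\alpha$ on the basis of nondegenerate simplices of $S^2$, proving the corollary.

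There is no real obstacle here beyond keeping the cancellation honest; the only mildly subtle step is ensuring that the $\pm$ signs are truly irrelevant, which is guaranteed by our standing convention of working mod $2$. The argument therefore collapses to the observation that the lone surviving term in $AW(V)(e_2)$ is the one in which the unit slot $1$ is paired with the $0$-simplex $e_0$, which is precisely what makes $h \circ \phi$ differ from $f$ on the class $1 \otimes \alpha \otimes \alpha$ and feeds into the proof of Theorem~\ref{thm:doesnotcommute}.
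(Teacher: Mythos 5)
Your proof is correct and follows essentially the same (implicit) route as the paper: sum the three interval-cut evaluations from the preceding proposition to get $AW(V)(e_2) = e_0 \otimes e_2 \otimes e_2$ (the two $e_2 \otimes e_0 \otimes e_2$ terms cancel, and would in any case pair to zero against $1 \otimes \alpha \otimes \alpha$ since $\alpha(e_0)=0$), then dualize. The check at $e_0$ is superfluous -- $M^2_0(1\otimes\alpha\otimes\alpha)$ lives in cohomological degree $2$ for degree reasons, so it can only be $0$ or $\alpha$ -- and your phrase about ``two positive-dimensional sub-simplices'' of a $0$-simplex is not quite right, but nothing depends on it.
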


\section{Proof of main theorems}

\begin{proposition}
    The map $\Psi \colon \HH_2(N^\bullet(S^2)) \to N^2(S^2)$ sends
    \begin{align*}
        \Psi(1 \otimes \alpha \otimes \alpha) = \alpha.
    \end{align*}
\end{proposition}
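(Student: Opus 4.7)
The plan is to trace the cycle $1 \otimes \alpha \otimes \alpha$ through the explicit chain map $M \colon \Tot(F) \to \Tot(G)$ of Lemma \ref{lm:MsfromAs}, applied to the Alexander--Whitney homotopy coherent transformation, and then read off the component in $G(0) = N^\bullet(S^2)$ corresponding to the inclusion of constant loops.

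First I would observe that since all Hochschild differentials vanish for $A = H^\bullet(S^2, \Z_2)$, the element $1 \otimes \alpha \otimes \alpha$ is literally a cycle in $\Tot(F^{\leq 2})$, concentrated in simplicial degree $2$ with zero components in degrees $0$ and $1$. By the second part of Lemma \ref{lm:MsfromAs}, the composite $\Tot(F) \xrightarrow{M} \Tot(G) \xrightarrow{\sim} H^\bullet(LS^2)$ agrees on homology with the Jones--Ungheretti isomorphism $\phi$, so $\Psi$ is computed on our cycle as the projection of $M(1 \otimes \alpha \otimes \alpha)$ onto the $G(0)$ summand under the augmentation $\Tot(G) \to G(0)$ induced by $\{\pt\} \to S^1_\bullet$.

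Next I would unpack this projection. The image $M(1 \otimes \alpha \otimes \alpha) \in \Tot(G)$ has at most three nonzero components, namely $M^0_2(1 \otimes \alpha \otimes \alpha) \in G(2)$, $M^1_1(1 \otimes \alpha \otimes \alpha) \in G(1)$, and $M^2_0(1 \otimes \alpha \otimes \alpha) \in G(0)$. The projection $\Tot(G) \to G(0)$ annihilates the first two, so $\Psi(1 \otimes \alpha \otimes \alpha) = M^2_0(1 \otimes \alpha \otimes \alpha)$, which by the corollary immediately preceding equals $\alpha$.

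The real work has in fact already been done: the identification of $M^2_0$ with the surjection-operad element $V \in \sur(3)$ satisfying $dV = U$, the explicit choice of $V$ in Proposition \ref{sum}, and the interval-cut evaluation on the unique non-degenerate $2$-simplex of $S^2$. Once those are in place the remaining argument is formal bookkeeping; the only point where one might be tempted to worry is whether the naive model $M$ computes the Jones--Ungheretti map correctly on homology, but this is precisely the content of Lemma \ref{lm:MsfromAs}.
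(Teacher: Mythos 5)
Your proposal is correct and follows essentially the same route as the paper: the preceding sections establish that $\Psi$ on the cycle $1 \otimes \alpha \otimes \alpha$ (concentrated in simplicial degree $2$) is computed by projecting $M(1 \otimes \alpha \otimes \alpha)$ to $G(0)$, which isolates $M^2_0(1 \otimes \alpha \otimes \alpha) = \alpha$ as computed via the surjection operad. The paper's proof is a one-line citation of exactly this bookkeeping, which you have simply spelled out more explicitly.
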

\begin{proof}
We have seen in previous section that $\Psi(1 \otimes \alpha \otimes \alpha) = M^2_0(1 \otimes \alpha \otimes \alpha)$ which we just computed.
\end{proof}

\begin{proof}[Proof of Theorem \ref{thm:doesnotcommute}]
First note that for the simplicial $S^2$ we have that $N^\bullet(S^2) = H^\bullet(S^2)$ as algebras, which ``implements'' $E_1$-formality of $S^2$.
In the diagram
\[
\begin{tikzcd}
    \HH_\bullet(N^\bullet(S^2)) \ar[r, "f"] \ar[d, "\phi"] & H^\bullet(S^2) \ar[d, equal] \\
    H^\bullet(LS^2) \ar[r, "h"] & H^\bullet(S^2)
\end{tikzcd}
\]
we have just computed the composition $\Psi = h \circ \phi$ and shown that it is non-zero on $1 \otimes \alpha \otimes \alpha$. The map $f$ arises from the fact that $N^\bullet(S^2)$ happens to be commutative, so that Hochschild homology has an augmentation given by projecting onto the $A$-summand. In particular, it maps $1 \otimes \alpha \otimes \alpha$ to zero.
\end{proof}

To show Theorem~\ref{thm:maintheorem2} let us first recall terminology from \cite{tradler2008infinity}. An $A$-bimodule map up to homotopy is defined to be (a homotopy class of) a chain map of $A$-bimodules
\[
F \colon B(A,A,A) \otimes_A B(A,A,A) \to \Hom_{\Z_2}(A, \Z_2)
\]
where $B(A,A,A)$ is the two-sided bar resolution. Such a map has components
\[
F_{pq} \colon \overline{A}^{\otimes p} \otimes A \otimes \overline{A}^{\otimes q} \otimes A \to \Z_2,
\]
satisfying certain conditions (arising from spelling out the differential).
By tensor-hom adjunction these maps are in a one-to-one correspondence with maps $(B(A,A,A) \otimes_A B(A,A,A))_{\otimes A^e} A \to \Z_2$ where $A^e = A \otimes A^\op$. Furthermore, there is a quasi-isomorphism  
\[
(B(A,A,A) \otimes_A B(A,A,A))_{\otimes A^e} A \to B^\cyc(A)
\]
by applying the augmentation $B(A,A,A) \to A$ on the second factor. In particular, one can always choose $F$ such that $F_{pq} = 0$ for $q \neq 0$.

In \cite{poirier2023note} a homotopy inner product $\tilde{\mathbf{F}}$ on $A := H^\bullet(S^2, \Z_2)$ is constructed geometrically and described in \cite[Example 4.2]{poirier2023note}. It has non-zero components $\tilde{\mathbf{F}}_{0,0}$ as expected (given by the Poincare pairing on $H^\bullet(S^2)$) and $\tilde{\mathbf{F}}_{2,0}$ with only non-zero component
\[
\tilde{\mathbf{F}}_{2,0}(\alpha \otimes \alpha \otimes 1 \otimes 1) = 1
\]

\begin{proof}[Proof of Theorem~\ref{thm:maintheorem2}]
We have checked that the map $W \colon B^\cyc(N^\bullet(S^2)) \to N^\bullet(S^2) \to \Z_2$ given by evaluating against the fundamental class send $1 \otimes \alpha \otimes \alpha$ and $\alpha$ to $1$. The homotopy inner product $F$ arising from precomposing with the projection $(B(A,A,A) \otimes_A B(A,A,A))_{\otimes A^e} A \to B^\cyc(A)$ has non-zero components $F_{0,0}$ (easily seen to be the pairing $H^\bullet(S^2) \otimes H^\bullet(S^2) \to \Z_2$) and $F_{2,0}$. We check that
\[
F_{2,0}(\alpha \otimes \alpha \otimes 1 \otimes 1) = W(1 \otimes \alpha \otimes \alpha) = 1
\]
and all other components vanish.
\end{proof}

\bibliographystyle{alpha}
\bibliography{ref}

\begin{thebibliography}{Man09}

\bibitem[AF15]{ayala2015factorization}
David Ayala and John Francis.
\newblock Factorization homology of topological manifolds.
\newblock {\em Journal of Topology}, 8(4):1045--1084, 2015.

\bibitem[BF04]{Be04}
Clemens Berger and Benoit Fresse.
\newblock Combinatorial operad actions on cochains.
\newblock In {\em Mathematical Proceedings of the Cambridge Philosophical
  Society}, volume 137, pages 135--174. Cambridge University Press, 2004.

\bibitem[Dug08]{dugger2008primer}
Daniel Dugger.
\newblock A primer on homotopy colimits.
\newblock {\em preprint}, 2008.

\bibitem[HL24]{HeutsLandPrep}
Gijs Heuts and Markus Land.
\newblock manuscript in preparation.
\newblock 2024.

\bibitem[Jon87]{jones1987cyclic}
John~DS Jones.
\newblock Cyclic homology and equivariant homology.
\newblock {\em Inventiones mathematicae}, 87(2):403--423, 1987.

\bibitem[Lod98]{Lo}
Jean-Louis Loday.
\newblock {\em Cyclic homology. A Series of Comprehensive Studies in
  Mathematics}.
\newblock Springer, Berlin, 1998.

\bibitem[Man09]{mandell09}
M.~Mandell.
\newblock Towards formality.
\newblock \url{mmandell.pages.iu.edu/talks/Austin3.pdf}, 2009.

\bibitem[MS03]{mcclure2003multivariable}
James McClure and Jeffrey Smith.
\newblock Multivariable cochain operations and little n-cubes.
\newblock {\em Journal of the American Mathematical Society}, 16(3):681--704,
  2003.

\bibitem[PT23]{poirier2023note}
Kate Poirier and Thomas Tradler.
\newblock A note on the string topology {BV}-algebra for $ {S}^2$ with $
  \mathbb{Z}_2 $ coefficients.
\newblock {\em arXiv preprint arXiv:2301.05381}, 2023.

\bibitem[Ste47]{steenrod1947products}
Norman~E Steenrod.
\newblock Products of cocycles and extensions of mappings.
\newblock {\em Annals of Mathematics}, pages 290--320, 1947.

\bibitem[Tra08]{tradler2008infinity}
Thomas Tradler.
\newblock Infinity-inner-products on {$A$}-infinity-algebras.
\newblock {\em J. Homotopy Relat. Struct.}, 3(1):245--271, 2008.

\bibitem[Ung17]{Un17}
Massimiliano Ungheretti.
\newblock Free loop space and the cyclic bar construction.
\newblock {\em Bulletin of the London Mathematical Society}, 49(1):95--101,
  2017.

\end{thebibliography}

\end{document}